\numberwithin{equation}{section}
\newtheorem{theorem}{Theorem}[section]
\newtheorem{lemma}[theorem]{Lemma}
\newtheorem{corollary}[theorem]{Corollary}
\newtheorem{problem}[theorem]{Problem}
\newtheorem{claim}{Claim}[theorem]
\theoremstyle{definition}
\newtheorem{definition}[theorem]{Definition}
\theoremstyle{remark}
\newcommand{\itemprefix}{}
\newcommand{\myitem}{%
\item\protected@edef\@currentlabel{\itemprefix\theenumi}%
}
\newcommand{\mc}[1]{\mathcal{#1}}
\newcommand{\setm}{\setminus}
\newcommand{\empt}{\emptyset}
\newcommand{\subs}{\subset}
\renewcommand{\>}{\right\rangle}
\author[A. Dow]{Alan Dow}
\address{Department of Mathematics,
University of North Carolina at Charlotte,
 Charlotte, NC 28223, USA}
\email{adow@uncc.edu}
\author[I. Juh\'asz]{Istv\'an Juh\'asz}
\address      { Alfr\'ed Rényi Institute of Mathematics, Eötvös Loránd Research Network}
\email{juhasz@renyi.hu}
\thanks{The second author was supported by NKFIH grant no. K129211.}
\subjclass[2010]{54A25, 54A35, 54D10, 03E04}
\keywords{regular space, Hausdorff space, free sequence, free set, weight, PFA}
\title{Spaces of countable free set number and PFA}
\date{\today}
\begin{document}

\begin{abstract}
The main result of this paper is that, under PFA, for every {\em regular} space $X$ with $F(X) = \omega$
we have $|X| \le w(X)^\omega$; in particular, $w(X) \le \mathfrak{c}$ implies $|X| \le \mathfrak{c}$.
This complements numerous prior results that yield consistent
examples of even compact Hausdorff spaces $X$ with $F(X) = \omega$ such that $w(X) = \mathfrak{c}$
and $|X| = 2^\mathfrak{c}$.

We also show that regularity cannot be weakened to Hausdorff in this result because we can find in ZFC a Hausdorff
space $X$ with $F(X) = \omega$ such that $w(X) = \mathfrak{c}$
and $|X| = 2^\mathfrak{c}$. In fact, this space $X$ has the {\em strongly anti-Urysohn} (SAU) property that any two
infinite closed sets in $X$ intersect, which is much stronger than $F(X) = \omega$. Moreover, any non-empty
open set in $X$ also has size $2^\mathfrak{c}$, and thus answers one of the main problems of \cite{JShSSz}
by providing in ZFC a SAU space with no isolated points.
\end{abstract}

\maketitle

\section{Introduction}
Following the terminology introduced in \cite{JSSz}, we call a {\em subset} $S \subs X$ {\em free} in $X$ if it admits a well-ordering,
or equivalently an indexing by ordinals, that turns it into a free sequence in $X$. In other words, free sets in $X$ are just the ranges of free sequences in $X$.
Also, we shall use $\mc{F}(X)$ to denote the collection of
all free subsets in $X$. Clearly, then $$F(X) = \sup\{|S| : S \in \mc{F}(X)\},$$
and we call $F(X)$ the {\em free set number} of $X$.

All our other terminology and notation concerning cardinal functions is standard, as e.g. it is in \cite{Ju}.
Our treatment of PFA follows section V.7 of \cite{K1}.

The main result of this paper is that, under PFA, for every {\em regular} space $X$ with $F(X) = \omega$
we have $|X| \le w(X)^\omega$; in particular, $w(X) \le \mathfrak{c}$ implies $|X| \le \mathfrak{c}$.
(By regular we mean regular and Hausdorff.) We will also show that regular cannot be weakened to Hausdorff in this result.

To put our result in perspective, we note that
free sets are obviously discrete, hence we have $F(X) \le s(X)$ for any topological space $X$.
By the classical result of Hajnal and Juhász \cite{HJ}, we have $|X| \le 2^{2^{s(X)}}$ for any Hausdorff space $X$, and
there are many consistent examples showing that this inequality is sharp for $s(X) = \omega$.
For instance, Fedorchuk's celebrated hereditary separable compact space $X$ from \cite{F}, constructed from $\lozenge$,
satisfies $w(X) = \mathfrak{c} = \omega_1$ and $|X| = 2^\mathfrak{c}$.
In \cite{JSh} consistent examples of hereditary separable 0-dimensional spaces $X$ are forced, with both $\mathfrak{c}$ and $2^\mathfrak{c}$
as large as you wish, independently of each other, such that $w(X) = \mathfrak{c}$ and $|X| = 2^\mathfrak{c}$.

On the other hand, Todorcevic proved in \cite[Theorem 11]{T}  that PFA implies $|X| \le \mathfrak{c}$ for any Hausdorff space $X$ with $s(X) = \omega$.
While this fails if we only have $F(X) = \omega$, even if in addition $w(X) = \mathfrak{c}$ holds, we do get
$|X| \le \mathfrak{c}$ from PFA for regular $X$ with $F(X) = \omega$ and $w(X) \le \mathfrak{c}$.

\section{A ZFC result}

There seems to be basically only one ZFC method of constructing free sequences (sets) that is the main lemma 2.1
of \cite{JSSz}. We repeat it here because we shall use it several times.

\begin{lemma}\label{lm:freeinA}
Assume that $X$ is a space, $A \subs X$, $\,{\kappa}$ is an infinite cardinal, and
$\mc W\subs \tau(X)$,  moreover
\begin{enumerate}[(a)]
\item $\mc W$ is closed under unions of subfamilies of size $<\,\kappa$,
\item $A\setm W\ne \empt$ for each $W\in \mc W$,
\item for each $S \subs A$ with $S\in \mc{F}(X)$ and $|S| < \kappa$ there is $W\in \mc W$
with $\overline{S}\subs W$.
\end{enumerate}
Then there is a subset of $A$ of size $\kappa$ that is free in $X$.
\end{lemma}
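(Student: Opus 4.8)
The plan is to build the desired free subset of $A$ by a transfinite recursion of length $\kappa$, constructing points $a_\xi \in A$ and witnessing open sets $W_\xi \in \mc W$ simultaneously so that at each stage the initial segment $\{a_\eta : \eta < \xi\}$ remains free and is "captured" by the union of the $W_\eta$'s below $\xi$. Concretely, at stage $\xi < \kappa$ I would have already chosen $\{a_\eta : \eta < \xi\} \subs A$ forming a free sequence together with open sets $W_\eta$ such that $\overline{\{a_\zeta : \zeta \le \eta\}} \subs W_\eta$ for each $\eta < \xi$. By hypothesis (a), since $|\xi| < \kappa$, the set $V_\xi := \bigcup_{\eta < \xi} W_\eta$ lies in $\mc W$; by hypothesis (b), $A \setm V_\xi \ne \empt$, so I can pick $a_\xi \in A \setm V_\xi$. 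Since $\{a_\eta : \eta < \xi\}$ is free and $\overline{\{a_\eta : \eta < \xi\}} \subs V_\xi$ while $a_\xi \notin V_\xi$, in particular $a_\xi \notin \overline{\{a_\eta : \eta < \xi\}}$, which is exactly the condition needed to keep $\{a_\eta : \eta \le \xi\}$ a free sequence. Then $\{a_\eta : \eta \le \xi\}$ is a free subset of $A$ of size $< \kappa$, so hypothesis (c) supplies $W_\xi \in \mc W$ with $\overline{\{a_\eta : \eta \le \xi\}} \subs W_\xi$, and the recursion continues.

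The key point to verify is that the resulting set $S := \{a_\xi : \xi < \kappa\}$ is genuinely free in $X$, i.e. that the enumeration $\langle a_\xi : \xi < \kappa \rangle$ is a free sequence: for every $\xi < \kappa$ one needs $\overline{\{a_\eta : \eta < \xi\}} \cap \overline{\{a_\eta : \eta \ge \xi\}} = \empt$. The first half of this separation is handled by the construction above via $V_\xi$, since $\overline{\{a_\eta : \eta < \xi\}} \subs V_\xi = \bigcup_{\eta<\xi} W_\eta$. For the tail, observe that each $a_\eta$ with $\eta \ge \xi$ was chosen outside $V_{\eta} \supseteq V_\xi$, hence the whole tail $\{a_\eta : \eta \ge \xi\}$ is disjoint from the open set $V_\xi$; therefore $\overline{\{a_\eta : \eta \ge \xi\}} \subs X \setm V_\xi$ as well — wait, closure need not avoid $V_\xi$ since $V_\xi$ is open, not closed. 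This is the subtle spot: what actually works is that $V_\xi$ is open and disjoint from the tail set, hence $V_\xi \cap \overline{\{a_\eta : \eta \ge \xi\}} = \empt$ because an open set meeting a closure must meet the set itself. Since $\overline{\{a_\eta : \eta < \xi\}} \subs V_\xi$, we conclude $\overline{\{a_\eta : \eta < \xi\}} \cap \overline{\{a_\eta : \eta \ge \xi\}} = \empt$, as required.

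Finally I should check that $|S| = \kappa$, which is immediate since the $a_\xi$ are pairwise distinct: if $\eta < \xi$ then $a_\xi \notin V_\xi \supseteq \overline{\{a_\eta\}} \ni a_\eta$, so $a_\xi \ne a_\eta$. I expect the main obstacle — really the only delicate issue — to be organizing the bookkeeping so that the two closure-separation requirements defining a free sequence are both met by the single chain of open sets $\langle W_\xi \rangle$; the argument hinges on the simple but crucial topological fact that an open set disjoint from a set is disjoint from its closure, applied to $V_\xi$ and the tail, combined with the monotonicity $V_\xi \subs V_\eta$ for $\xi \le \eta$ guaranteeing every later point is dumped outside $V_\xi$. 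Closure under $<\kappa$-unions in (a) is what makes the recursion go through all the way to $\kappa$, and (b) is what prevents it from terminating early.
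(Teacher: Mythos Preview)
The paper does not actually prove this lemma --- it is quoted from \cite{JSSz} and used as a black box --- so there is no ``paper's own proof'' to compare against. Your overall strategy (transfinite recursion picking $a_\xi$ outside a $\mc W$-set covering what came before, then invoking (c) to cover the new initial segment) is the standard one and is correct in spirit, but there is a real gap in the execution.

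The problematic step is the repeated assertion that $\overline{\{a_\eta : \eta < \xi\}} \subs V_\xi = \bigcup_{\eta<\xi} W_\eta$. For a successor $\xi = \gamma+1$ this is fine, since the left side is contained in $W_\gamma$. For a \emph{limit} $\xi$, however, you only know that each $\overline{\{a_\eta : \eta \le \gamma\}}$ lies in $W_\gamma \subs V_\xi$; the closure of the union $\{a_\eta : \eta < \xi\}$ can be strictly larger than the union of these closures and need not sit inside the open set $V_\xi$. Consequently your verification that $\langle a_\eta : \eta \le \xi\rangle$ is free (needed to invoke (c)) and your final freeness argument both break down at limit indices: you correctly get $\overline{\{a_\eta : \eta \ge \xi\}} \subs X \setm V_\xi$, but you never secure an open set containing the \emph{closure} of the initial segment and missing the tail.

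The fix is a small reordering of the recursion: at stage $\xi$, \emph{first} apply (c) to the (inductively free) set $\{a_\eta : \eta < \xi\}$ to obtain $W_\xi \in \mc W$ with $\overline{\{a_\eta : \eta < \xi\}} \subs W_\xi$, and \emph{then} choose $a_\xi \in A \setm \bigcup_{\eta \le \xi} W_\eta$ (this union is still of size $<\kappa$, so (a) and (b) apply). Now for every $\beta$ one has $\overline{\{a_\eta : \eta < \beta\}} \subs W_\beta$ while every $a_\eta$ with $\eta \ge \beta$ avoids $W_\beta$; since $W_\beta$ is open, the tail closure misses $W_\beta$, and freeness follows at all indices, limit or not.
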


Our next result is an easy consequence of Lemma \ref{lm:freeinA}. We recall that a $G_\kappa$-set
is one obtainable as the intersection of at most $\kappa$ open sets.

\begin{lemma}\label{lm:Gdelta}
Assume that $X$ is any space, $z \in X$, and $\kappa$ is an infinite cardinal, moreover for every
open $U$ with $z \in U$ there is a {\em closed} $G_\kappa$-set $H$ such that $z \in H \subs U$.
(Clearly, this holds true if $X$ is regular.)
If, in addition, $Y \subs X \setm \{z\}$ is such that
\begin{enumerate}[(i)]
\item $H \cap Y \ne \emptyset$ for every $G_\kappa$-set $H$ with $z \in H$,
\item $z \notin \overline{S}$ for every $S \in \mc{F}(X) \cap [Y]^{\le \kappa}$,
\end{enumerate}
then $\mc{F}(X) \cap [Y]^{\kappa^+} \ne \emptyset$, i.e. there is a subset of $Y$ of size $\kappa^+$ that is free in $X$.
\end{lemma}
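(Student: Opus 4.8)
The goal is to apply Lemma 2.1 (lm:freeinA) with $\kappa$ replaced by $\kappa^+$ and with $A = Y$. So I need to produce a family $\mc W \subs \tau(X)$ satisfying conditions (a), (b), (c) of that lemma for the cardinal $\kappa^+$.

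Let me think about what $\mc W$ should be. Condition (c) says: for every free $S \subs Y$ with $|S| < \kappa^+$, i.e. $|S| \le \kappa$, there's $W \in \mc W$ with $\overline S \subs W$. By hypothesis (ii), $z \notin \overline S$ for such $S$. So $\overline S$ is a closed set not containing $z$, hence $X \setm \overline S$ is an open neighborhood of $z$. By the hypothesis about $X$ (the "$G_\kappa$-regularity at $z$"), there's a closed $G_\kappa$-set $H$ with $z \in H \subs X \setm \overline S$, equivalently $\overline S \subs X \setm H$. And $X \setm H$ is open. So the natural candidate is
$$\mc W = \{ X \setm H : H \text{ is a closed } G_\kappa\text{-set with } z \in H \}.$$

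Now check the conditions. Condition (b): $Y \setm (X \setm H) = Y \cap H \ne \emptyset$ by hypothesis (i) — good. Condition (a): is $\mc W$ closed under unions of $< \kappa^+$, i.e. $\le \kappa$, many members? A union $\bigcup_{i} (X \setm H_i) = X \setm \bigcap_i H_i$; if there are $\le \kappa$ indices and each $H_i$ is a closed $G_\kappa$-set containing $z$, then $\bigcap_i H_i$ is closed, contains $z$, and is a $G_\kappa$-set (intersection of $\le \kappa \cdot \kappa = \kappa$ open sets). So $\bigcap_i H_i$ is again a closed $G_\kappa$-set containing $z$, hence $X \setm \bigcap_i H_i \in \mc W$ — good. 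Condition (c): handled above via hypothesis (ii) and the $G_\kappa$-regularity at $z$. Then Lemma 2.1 yields a free subset of $Y$ of size $\kappa^+$, which is exactly the conclusion.

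I expect there are essentially no obstacles here — this is a genuinely routine deduction, and the only thing to be careful about is bookkeeping with cardinals: "$< \kappa^+$" means "$\le \kappa$", and an intersection of $\le \kappa$ sets each of which is an intersection of $\le \kappa$ opens is an intersection of $\le \kappa$ opens. I should also note explicitly that the parenthetical claim in the statement — that the $G_\kappa$-regularity hypothesis at $z$ holds automatically when $X$ is regular — follows because in a regular space, given $z \in U$ open, pick open $V$ with $z \in V \subs \overline V \subs U$, and then $\overline V$ is a closed (even $G_0$, hence $G_\kappa$) set between $z$ and $U$. That's all that's needed.
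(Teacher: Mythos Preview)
Your proof is correct and essentially identical to the paper's: the paper applies Lemma~2.1 with $A = Y$, $\kappa^+$ in place of $\kappa$, and $\mathcal{W}$ the family of all open $F_\kappa$-sets $U$ with $z \notin U$, which is exactly your family $\{X \setminus H : H \text{ closed } G_\kappa\text{-set},\ z \in H\}$ described via complements. Your verification of conditions (a)--(c) is complete and accurate.
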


\begin{proof}
We may apply Lemma \ref{lm:freeinA} with $Y$ instead of $A$, $\kappa^+$ instead of $\kappa$, and
with $\mathcal{W}$ consisting of all open $F_\kappa$-sets $U$ such that $z \notin U$.
\end{proof}

Before presenting the main result of this section, we need to introduce the following piece of notation.

\begin{definition}
For any (infinite) cardinal $\kappa$,
$${\bf{h}}(\kappa) = \sup\{|X| : X \text{ is regular with $F(X) = \omega$ and }d(X) \le \kappa\}.$$
\end{definition}

Note that we trivially have $2^\kappa \le {\bf{h}}(\kappa) \le 2^{2^{\kappa}}$.

\begin{theorem}\label{tm:h}
For every regular space $X$ with $F(X) \le \kappa$ we have $$|X| \le \big(w(X) \cdot {\bf{h}}(\kappa)\big)^\kappa.$$
\end{theorem}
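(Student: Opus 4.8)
The plan is to bound $|X|$ by a standard elementary-submodel closing-off argument, using Lemma \ref{lm:Gdelta} as the engine that forbids too many points from sitting outside the closure of a small set. Fix a regular $X$ with $F(X)\le\kappa$ and let $\lambda = \big(w(X)\cdot{\bf h}(\kappa)\big)^\kappa$; note $\lambda^\kappa=\lambda$. Fix a base $\mc B$ for $X$ of size $w(X)\le\lambda$, and take an elementary submodel $M\prec H(\theta)$ (for suitably large $\theta$) with $X,\mc B,\kappa\in M$, $|M|=\lambda$, and $M$ closed under $\kappa$-sequences, i.e. ${}^\kappa M\subs M$. Put $Y = X\cap M$, so $|Y|\le\lambda$; the goal is to show $X = \overline{Y}$, and then separately that $|\overline{Y}|\le\lambda$, which together give $|X|\le\lambda$.

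For the first part, suppose toward a contradiction that $z\in X\setm\overline{Y}$. I want to apply Lemma \ref{lm:Gdelta} with this $z$, with $\kappa$ as the cardinal, and with $Y$ as the set "$Y$" of that lemma; since $X$ is regular, the hypothesis about closed $G_\kappa$-sets is automatic. Hypothesis (ii) of Lemma \ref{lm:Gdelta} holds outright: if $S\in\mc F(X)\cap[Y]^{\le\kappa}$ then, using ${}^\kappa M\subs M$, we get $S\in M$, hence $\overline S\in M$; but $\overline S\subs\overline Y$ and $z\notin\overline Y$, so $z\notin\overline S$. Thus to reach the contradiction it suffices to verify hypothesis (i), namely that every $G_\kappa$-set $H$ with $z\in H$ meets $Y$ — because then Lemma \ref{lm:Gdelta} produces a free subset of $Y$ of size $\kappa^+>\kappa$, contradicting $F(X)\le\kappa$. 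So if (i) fails, there is a $G_\kappa$-set $H=\bigcap_{\alpha<\kappa}U_\alpha$ with $z\in H$ and $H\cap M=\empt$; shrinking each $U_\alpha$ inside $\mc B$, we may take the $U_\alpha\in\mc B$, and then consider the subspace $Z=\overline{\bigcup\{B\in\mc B\cap M : \overline B\subs X\setm H\}}$ — I expect the right move here is to show that this relatively small ($d(Z)\le|\mc B\cap M|\le\lambda$), regular, $F\le\kappa$ subspace cannot "miss" a whole $G_\kappa$-neighborhood of $z$ once one feeds the definition of ${\bf h}(\kappa)$ back through $M$; this is the step I expect to be the main obstacle, and it is where the factor ${\bf h}(\kappa)$ (as opposed to just $w(X)$) has to enter.

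For the second part — bounding $|\overline Y|$ — the natural approach is to note that $\overline Y$ is itself regular with $F(\overline Y)\le F(X)\le\kappa$ and $d(\overline Y)\le|Y|\le\lambda$, so if $\lambda\le{\bf h}(\kappa)$ we are immediately done by the definition of ${\bf h}$, and otherwise $\lambda = w(X)^\kappa$ with $w(X)>{\bf h}(\kappa)$, and one instead iterates: build an increasing continuous chain $\langle M_\xi:\xi<\kappa^+\rangle$ of such $\kappa$-closed submodels of size $\lambda$, set $Y=X\cap\bigcup_\xi M_\xi$, and use that a point of $\overline Y\setm Y$ would have a local base of closures of basic sets coded below some $M_\xi$, again contradicting either $F(X)\le\kappa$ via Lemma \ref{lm:Gdelta} or the ${\bf h}(\kappa)$ bound on a separable-enough subspace. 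In all cases $|X|=|\overline Y|\le\lambda=\big(w(X)\cdot{\bf h}(\kappa)\big)^\kappa$, as required. The only genuinely delicate point is the interplay in the previous paragraph between the $G_\kappa$-set $H$ witnessing a failure of density and the invariant ${\bf h}(\kappa)$; everything else is routine elementary-submodel bookkeeping together with two applications of Lemma \ref{lm:Gdelta}.
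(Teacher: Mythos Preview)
There is a genuine gap. Your two-part decomposition does no real work: once you arrange $\mc B\subs M$ (which you can, since $|\mc B|=w(X)\le\lambda=|M|$), the set $Y=X\cap M$ is automatically dense in $X$, because every nonempty $B\in\mc B$ lies in $M$ and hence meets $M$ by elementarity. So $X=\overline Y$ is free, and your ``first part'' is vacuous. Worse, your attempted use of Lemma~\ref{lm:Gdelta} there cannot succeed as written: if $z\notin\overline Y$ then any open $U\ni z$ with $U\cap Y=\emptyset$ is itself a $G_\kappa$-set witnessing the failure of~(i), so (i) \emph{always} fails in that situation and you are left with only the hand-waved fallback, which is not an argument. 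That leaves the ``second part,'' but this is just the theorem restated (since $\overline Y=X$), and your proposed use of ${\bf h}(\kappa)$ there is a misapplication: by definition ${\bf h}(\kappa)$ bounds the cardinality of spaces with density $\le\kappa$, not density $\le\lambda$.

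The idea you are missing is to aim directly for $X\subs M$, i.e.\ take $z\in X\setm Y$ (not $X\setm\overline Y$) and apply Lemma~\ref{lm:Gdelta}. Then condition~(i) is the \emph{easy} one, verified exactly by the base argument: given a $G_\kappa$-set $H\ni z$, shrink to $\bigcap\mc C\subs H$ with $\mc C\in[\mc B]^{\le\kappa}$; $\kappa$-closure gives $\mc C\in M$, so $\bigcap\mc C\in M$ is nonempty and thus meets $M$. The invariant ${\bf h}(\kappa)$ enters in condition~(ii), not~(i): for $S\in[Y]^{\le\kappa}$ the closure $\overline S$ is a closed (hence $F(\overline S)\le\kappa$) regular subspace with $d(\overline S)\le\kappa$, so $|\overline S|\le{\bf h}(\kappa)$. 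Since $S\in M$ by $\kappa$-closure, also $\overline S\in M$; arranging $\mu=w(X)\cdot{\bf h}(\kappa)\subs M$, every element of $M$ of size $\le\mu$ is a subset of $M$, so $\overline S\subs Y$ and in particular $z\notin\overline S$. Lemma~\ref{lm:Gdelta} now yields a free set of size $\kappa^+$, contradicting $F(X)\le\kappa$; hence $X\subs M$ and $|X|\le|M|=\lambda$.
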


\begin{proof}
Let us put $\mu = w(X) \cdot {\bf{h}}(\kappa)$ and then fix an open base $\mathcal{B}$ of $X$ with $|\mathcal{B}| = w(X) \le \mu$.
Next we consider an elementary submodel
$M$ of $H(\vartheta)$ for a large enough regular cardinal $\vartheta$ such that $|M| = \mu^{\kappa}$, $\,M$ is
${\kappa}$-closed, and $X, \mathcal{B} \in M$. We shall show that $X \subs M$.

Indeed, assume on the contrary that $X \cap M = Y$ and $z \in X \setm Y$. Now, if $H$ is any $G_\kappa$-set with
$z \in H$ then there is some $\mathcal{C} \in [\mathcal{B}]^{\le \kappa}$ such that $z \in C = \cap \mathcal{C} \subs H$.
But $\mathcal{B} \subs M$ and the $\kappa$-closedness of $M$ imply $\mathcal{C} \in M$, and so $C \cap M = C \cap Y \ne \emptyset$
by elementarity and $z \in C$, hence $H \cap Y \ne \emptyset$ as well.

Next, for every subset $S \in [Y]^{\le \kappa}$ we have $|\overline{S}| \le {\bf{h}}(\kappa) \le \mu$ by definition, moreover
$S \in M$ and hence $\overline{S} \in M$ as well. But then we also have $\overline{S} \subs Y$. This means that
both conditions of Lemma \ref{lm:Gdelta} are satisfied, hence there is a subset of $Y$ of size $\kappa^+$ that is free in $X$.
But this contradicts $F(X) \le \kappa$, completing our proof.
\end{proof}

\section{Some consequences of PFA}

We start this section with a general theorem that gives conditions which,
under PFA, imply the existence of an uncountable free set.

\begin{theorem}\label{tm:PFA}
Assume PFA. Let $X$ be a topological space, $Y \subs X$ its subspace, and $\mathcal{A} \subs [Y]^\omega$ satisfying the following three conditions.
\begin{enumerate}[(1)]
\item For every countable $\mathcal{A}_0 \subs \mathcal{A}$ we have $$Z(\mathcal{A}_0) = \bigcap \{\overline{A} : A \in \mathcal{A}_0\} \cap Y \ne \emptyset.$$
\item For every $y \in Y$ there are an open $U_y$ with $y \in U_y$ and $A_y \in \mathcal{A}$ such that $\overline{U_y} \cap \overline{A_y} = \emptyset.$
\item Let $\mathcal{H}$ be the collection of all $H \subs Y$ intersecting $Z(\mathcal{A}_0)$ for all $\mathcal{A}_0 \in [\mathcal{A}]^{\le \omega}$.
For every $H \in \mathcal{H}$ there is $A \in \mathcal{A}$ with $A \subs H$.
\end{enumerate}
Then $Y$ has an uncountable subset that is free in $X$.
(All closures above are taken in $X$.)
\end{theorem}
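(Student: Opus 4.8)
The plan is to build an uncountable free sequence in $X$ by a Proper Forcing Axiom argument: I would introduce a poset $\mathbb{P}$ whose conditions are finite approximations to a free sequence through $Y$ together with bookkeeping data witnessing freeness, prove $\mathbb{P}$ is proper (in fact, I expect, $\sigma$-closed $*$ ccc, or that it satisfies some standard preservation property making PFA applicable), and then apply PFA to a suitably chosen family of $\omega_1$ dense sets to extract a sequence $\langle y_\alpha : \alpha < \omega_1\rangle$ from $Y$ together with auxiliary sets $A_\alpha \in \mathcal{A}$ such that $\overline{\{y_\beta : \beta < \alpha\}} \cap \overline{A_\alpha} = \emptyset$ and $y_\alpha \in Z(\{A_\beta : \beta < \alpha\})$ whenever the latter is nonempty. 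Condition (1) guarantees this intersection is always nonempty for the countably many indices below $\alpha$, so the recursion never gets stuck; conditions (2) and (3) are exactly what is needed to keep the ``separating'' sets $A_\alpha$ available and to arrange that the closure of an initial segment misses the tail.

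More concretely, a natural choice of condition is a finite partial function $p$ with domain a finite subset of $\omega_1$, assigning to each $\alpha \in \dom p$ a pair $(y_\alpha, A_\alpha) \in Y \times \mathcal{A}$ and an open set $U_\alpha \ni y_\alpha$ with $\overline{U_\alpha} \cap \overline{A_\alpha} = \emptyset$ (which exists by (2)), subject to the coherence requirement that for $\alpha < \beta$ both in $\dom p$ one has $y_\beta \in \overline{U_\alpha}$ is \emph{false} — rather, one wants $y_\beta \notin \overline{U_\alpha}$ to fail the wrong way; the correct demand is that $\{y_\alpha : \alpha \in \dom p\}$ be free as a finite sequence, which by definition of free sequence means $\overline{\{y_\gamma : \gamma < \beta,\ \gamma \in \dom p\}} \cap \overline{\{y_\gamma : \gamma \ge \beta,\ \gamma \in \dom p\}} = \emptyset$ for each split point $\beta$. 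The sets $A_\alpha$ serve to certify such separations: one arranges $\{y_\gamma : \gamma \ge \beta\} \subs \overline{A_{?}}$ type containments using (3) applied to the set $H$ of all ``legal future points''. I would order $\mathbb{P}$ by reverse inclusion on all the data.

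The key steps, in order, are: (i) show $\mathcal{H} \neq \emptyset$ and more importantly that for any finite condition the relevant set of candidates for the next point lies in $\mathcal{H}$, so (3) supplies an $A \in \mathcal{A}$ with $A$ inside it — this lets us meet the density requirements ``$\alpha \in \dom p$'' for each fixed $\alpha < \omega_1$; (ii) verify that these $\omega_1$ sets are genuinely dense, using (1) to find a point of $Z(\mathcal{A}_0) \cap Y$ for the finitely (hence countably) many $A_\gamma$ already chosen, and using (2) to attach the separating open set and the new $A_\alpha$; (iii) prove properness of $\mathbb{P}$: given a countable elementary submodel $N$ containing $\mathbb{P}$ and the data, and a condition $p \in N \cap \mathbb{P}$, build an $(N,\mathbb{P})$-generic condition $q \le p$ by a fusion/amalgamation in which the single new point $y_\alpha$ added ``above $N$'' is chosen in $Z(\mathcal{A}_0) \cap Y$ for $\mathcal{A}_0$ the countable family of all $A$'s appearing in $N$, which is nonempty by (1), and is placed in the closures appropriately via (3); (iv) apply PFA and read off the uncountable free set $\{y_\alpha : \alpha < \omega_1\} \subs Y$, checking freeness in $X$ directly from the coherence built into the generic filter.

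The main obstacle I anticipate is step (iii), the proof of properness — specifically, showing that the generic condition $q$ can simultaneously be compatible with all conditions in the generic filter on $N$ while respecting the ``closure separation'' coherence, since the closures are taken in $X$ and need not be controlled by $N$; this is exactly where one must exploit that the separating set for the new point $y_\alpha$ can be taken \emph{inside $N$} (using that $\mathcal{A} \in N$, condition (2) is first-order over $N$, and condition (3) produces an $A \in \mathcal{A} \cap N$ once the relevant $H$ is shown to be in $\mathcal{H}$ and definable from $N$-data). A secondary subtlety is verifying condition (3)'s hypothesis at each stage: one must argue that the set of ``admissible future points'' (those $y \in Y$ consistent with extending the current finite free sequence) does meet every $Z(\mathcal{A}_0)$, which reduces to (1) plus the fact that finitely many separating conditions only forbid a closed set missing some $\overline{A}$, leaving the $Z$-sets untouched.
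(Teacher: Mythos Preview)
Your outline has the right overall shape, but the properness argument---which you correctly identify as the main obstacle---contains a genuine gap that is not repaired by the hints you give. Constructing an $(N,\mathbb{P})$-generic $q$ by adding \emph{one} point $y_\alpha$ above $N$ in $Z(\mathcal{A}\cap N)$ is not enough: to verify that $D\cap N$ is predense below $q$ you must take an arbitrary $q'\le q$, which may have placed finitely many points $y_{\alpha_0},\dots,y_{\alpha_k}$ at indices above $N$, and then find $r\in D\cap N$ compatible with $q'$. Compatibility forces every new point contributed by $r$ (all at indices below $\sup(N\cap\omega_1)$) to lie in the open set $W=\bigcap_{i\le k}U_{y_{\alpha_i}}$, an object \emph{not in} $N$. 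Nothing in your sketch explains how to locate such an $r$; the remark that ``the separating set for the new point can be taken inside $N$'' addresses only the single top point, not the reflection of a whole finite block.

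The paper solves exactly this problem, and its solution is the real content of the proof. Conditions are finite $\in$-chains of countable elementary submodels $M$, with the point $y(M)$ defined canonically as the $\prec$-least element of $Z(M\cap\mathcal{A})$; the ordering demands that models inserted below some $N$ have their $y$-values in a specific open neighbourhood $W(q,N)$ of $y(N)$. For properness one must, given $p$ with $k{+}1$ models above $N$, find $r\in D\cap N$ whose $k{+}1$ top models have $y$-values all in $W(p,N)$. This is done by a $k$-step tree-pruning: form the tree of all $(k{+}1)$-tuples $s_r$ arising from candidates $r$, and recursively thin it so that at every node the set of immediate successors lies in $\widehat{\mathcal{A}}$ (contains a member of $\mathcal{A}$). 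Condition~(3), applied via a short claim that $y(M)\in H\in M$ forces $H\in\mathcal{H}$, is what makes each pruning step succeed; then, since $y(N)\in\overline{A}$ for every $A\in\mathcal{A}\cap N$, one can walk up the pruned tree choosing each coordinate in $W(p,N)$. Your proposal does not contain this mechanism, and without it the properness claim does not go through. (Incidentally, the poset is not $\sigma$-closed~$*$~ccc, and your stated coherence requirement---that the finite set be free---is essentially vacuous in a Hausdorff space and would not by itself yield freeness of the limit.)
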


\begin{proof}
We start by fixing a well-ordering $\prec$ of $Y$ and then a large enough regular cardinal $\kappa$ such that
$H(\kappa)$ contains all the objects above.
We shall say that $M$ is $suitable$
if it is a countable elementary submodel of $H(\kappa)$ and contains $\big\{X, Y, \prec, \mathcal{A}, \{\<U_y, A_y\> : y \in Y\}, \mathcal{H}\big\}$.
We shall denote by $\mathcal{S}$ the collection of all suitable $M$'s.

For every $M \in \mathcal{S}$ let $y(M)$ be the $\prec$-minimal member of $Z(M \cap \mathcal{A}).$
Note that if $M, N \in \mathcal{S}$ with $M \in N$ then $Z(M \cap \mathcal{A}) \supset Z(N \cap \mathcal{A})$
and $y(M) \in N$ imply $y(M) \prec y(N)$.

Now we are ready to define the partial order $\mathbb{P} = \<P, <\>$ that will be used to prove our result.
The elements of $P$ will be all finite $\epsilon$-chains of members of $\mathcal{S}$. Clearly, for every
$p \in P$, if $p \ne \emptyset$ then $\cap p$ is the bottom and $\cup p$ is the top member of $p$.

To define $<\,$, we first introduce the following notation. If $N \in p \in P$ then we let
$$W(p, N) = \bigcap \{U_{y(M)} : M \in p \text{ and } y(N) \in U_{y(M)} \}.$$
Clearly, then $y(N) \in W(p, N) \subs U_{y(N)}$ and $N \in M \in p$ implies $y(M) \notin U_{y(N)}$ by condition (2).

Now, by definition, $p < q$ holds for for $p,q \in P$ iff $p \supset q$ and for every $M \in p \setm q$
with $M \in \cup q$ we have $y(M) \in W(q,N)$, where $N$ is the minimal element of $q$ such that $M \in N$.

We have to check that $<\,$ is transitive. So, assume that $r < p < q$, $M \in r \setm q$ with $M \in \cup q$,
moreover $N$ is the minimal element of $q$ such that $M \in N$. Now we distinguish two cases.

First, if $N$ is also the minimal element of $p$ such that $M \in N$ then we have $y(M) \in W(p,N) \subs W(q,N)$,
using that $q \subs p$. Otherwise, the minimal $K \in p$ containing $M$ satisfies $M \in K \in N$,
while $N$ is also the minimal element of $q$ with $K \in N$. So, by definition, $p < q$ implies $y(K) \in W(q,N)$
that clearly implies $W(p,K) \subs W(q,N)$. But by $r < p$ we have $y(M) \in W(p,K)$, hence $y(M) \in W(q,N)$ as well.

To be able to apply PFA, we also have to show that $\mathbb{P}$ is proper. To do that, we choose a large enough regular
cardinal $\vartheta$ such that $\mathbb{P} \in H(\vartheta)$. Clearly, $\vartheta > 2^\kappa$ will do.
We intend to show that for every countable elementary submodel $\mathcal{M}$ of $H(\vartheta)$ with $\mathbb{P} \in \mathcal{M}$
and for every condition $p_0 \in P \cap \mathcal{M}$ there is a condition $q_0 < p_0$
which is $(\mathcal{M},\mathbb{P})$-generic.

It is standard to check that $N = \mathcal{M} \cap H(\kappa) \in \mathcal{S}$, hence $\{N\} \in P$. More generally,
for any $p_0 \in P \cap \mathcal{M}$ we have $p_0 \in N$, hence $q_0 = p_0 \cup \{N\} \in P$ and $q_0 < p_0$.
We claim that this $q_0$ is $(\mathcal{M},\mathbb{P})$-generic, i.e.
for every dense set $D$ in $\mathbb{P}$, if $D \in \mathcal{M}$ then $D \cap \mathcal{M}$ is predense below $q_0$.
Now, this is trivially implied by the following key lemma.

\begin{lemma}\label{lm:gen}
For every dense set $D$ in $\mathbb{P}$ with $D \in \mathcal{M}$,
if $N \in p \in D$, where $N=\mathcal{M}\cap H(\kappa)$, then there is $r \in D \cap \mathcal{M}$
such that $r \cup p < p$.
\end{lemma}

\begin{proof}
To start with, we fix the dense $D \in \mathcal{M}$ and $p \in D$ with $N \in p$.
Then we write $p^- = p \cap \mathcal{M}$ and
$p \setm \mathcal{M} = \{N = N_0\, \epsilon N_1\, \epsilon ...\, \epsilon N_k\}$.
It will be convenient to put $P_k = \{p \in P : |p| = k+1\}$.
Clearly, $p^- \in P \cap \mathcal{M}$ and $p \setm \mathcal{M} \in P_k$.
Also, each $q \in P_k$ is of the form $q = \{M_{q,0}\, \epsilon M_{q,1}\, \epsilon ...\, \epsilon M_{q,k}\}$.

The $r \in D \cap \mathcal{M}$ that we need will be of the form $r = p^-\, \cup q$ for some $q \in P_k \cap \mathcal{M}$
with $p^-\, \in M_{q,0}$. Then, to have $r \cup p < p$, what we need is that $y(M_{q,i}) \in W(p,N)$ for all $i \le k$.
To handle this, we shall write $$s_q = \<y(M_{q,i}) : i \le k\>$$ for any $q \in P_k$.

Since $p^-, D, P_k \in \mathcal{M}$,
so is $$E_0 = \{q \in P_k :  p^- \in \cap q = M_{q,0} \text{ and } p^- \cup q \in D\},$$ as well as
$L_0 = \{s_q : q \in E_0\}$. We also have $L_0 \in H(\kappa)$, hence $L_0 \in N = \mathcal{M} \cap H(\kappa)$.
We clearly have $p \in E_0$, hence $s_p \in L_0$. Finally, let $T_0$ be tree consisting of all initial
segments of members of $L_0$, formally $$T_0 = \{t \upharpoonright i : t \in L_0 \text{ and } i \le k + 1\}.$$
Thus $L_0$ is the top level of $T_0$; our trees grow upwards.

Next we are going to recursively prune $T_0$ in $k$ steps to obtain the trees $T_0 \supset T_1 \supset ... \supset T_k$
in such a way that $s_p \in T_i \in \mathcal{M}$ and hence $T_i \in N$ for all $i \le k$.
Of course, we already know these for $i= 0$.

To prepare this recursive pruning, we introduce some new notation. First, we are going to denote by $\widehat{\mathcal{A}}$
the family of all those subsets of $Y$ that include some member of $\mathcal{A}$. Note that condition (3) of our theorem
simply says that $\mathcal{H} \subs \widehat{\mathcal{A}}$.
Once we have the tree $T_i$ and $t \in T_i$ we are going to write $$[t]_i = \{s \in T_i : t \subs s \text{ or } s \subs t\},$$
moreover $suc_i(t)$ will denote the set of immediate successors of $t$ in $T_i$.

We shall also need the following simple claim.

\begin{claim}
For every suitable $M \in \mathcal{S}$ and $H \subs Y$, if $y(M) \in H \in M$ then $H \in \mathcal{H}$.
\end{claim}

Indeed, if we had $H \notin \mathcal{H}$ then, by elementarity, there would be some $\mathcal{A}_0 \in [\mathcal{A}]^{\le \omega} \cap M$
with $H \cap Z(\mathcal{A}_0) =  \emptyset$.
But then we would have $y(M) \in Z(M \cap \mathcal{A}) \subs Z(\mathcal{A}_0)$, a contradiction.

Now, to get $T_1$ from $T_0$, we first define $$L_1 = \big\{t \in T_0 : |t| = k \text{ and } suc_0(t) \in \widehat{\mathcal{A}} \big\},$$
and then put $T_1 = \cup \{ [t]_0 : t \in L_1\}$. It is clear that then $T_0 \in N$ implies $L_1, T_1 \in N$.
To see that $s_p \in T_1$, we have to show that $t_p = s_p \upharpoonright k \in L_1$. But this follows from
$y(M_{p,k}) \in suc_0(t_p) \in M_{p,k}$ and the above Claim because $suc_0(t_p) \in \mathcal{H} \subs \widehat{\mathcal{A}}$.

The general recursive step from $T_i$ to $T_{i+1}$ (for $i < k$) is very similar.  Given $T_i \in N$, we first define
$$L_{i + 1} = \big\{t \in T_i : |t| = k + 1 - i \text{ and } suc_i(t) \in \widehat{\mathcal{A}} \big\},$$
and then put $T_{i +1} = \cup \{ [t]_i : t \in L_{i + 1}\} \subs T_i$. By induction, then $T_i \in N$ implies $L_{i+1}, T_{i+1} \in N$,
moreover if $t_p = s_p \upharpoonright k - i$ then $y(M_{p,k - i}) \in suc_i(t_p) \in M_{p,k-i}$ and the Claim imply $t_p \in L_{i + 1}$,
and hence $s_p \in T_{i+1}$.

So, after having completed all the $k$ steps, we arrive at the tree $T_k$ which clearly has the following property:
For every $t \in T_k$ if $|t| \le k$ then $suc_k(t) \in \widehat{\mathcal{A}}$.

Now we are going to show that $T_k$ has a member $s = \<y_i : i \le k\> \in N$ such that $y_i \in W(p, N)$ for all $i \le k$.
First, to find $y_0$, we use $\emptyset \in N \cap T_k$  and $suc_k(\emptyset) \in N \cap \widehat{\mathcal{A}}$ to obtain
$A_0 \in N \cap \mathcal{A}$ such that $A_0 \subs suc_k(\emptyset)$. We then have $y(N) \in \overline{A_0}$, hence $W(p, N) \cap A_0 \ne \emptyset$.
But, as $A_0$ is countable, we also have $A_0 \subs N$, hence any $y_0 \in W(p, N) \cap A_0$ is in $N \cap suc_k(\emptyset)$.
We may go on like this by induction. Given $s_j = \<y_i : i < j\> \in N \cap T_k$ such that $y_i \in W(p, N)$ for all $i < j$ for some $0 < j \le k$,
we use $suc_k(s_j) \in N \cap \widehat{\mathcal{A}}$ to obtain $A_j \in N \cap \mathcal{A}$ such that $A_j \subs N \cap suc_k(s_j)$.
Then $y(N) \in \overline{A_j}$, and hence $W(p, N) \cap A_j \ne \emptyset$ yields us $y_j \in N \cap W(p, N) \cap suc_k(s_j)$.

We have $T_k \subs T_0$, so $s \in T_0 \cap \mathcal{M}$, and this means that by elementarity there is some $q \in P_k \cap \mathcal{M}$
such that $s = s_q$ and $r = p^- \cup q \in D \cap \mathcal{M}$. But the choice of $s = s_q$ then allows us to conclude that $r \cup p < p$,
completing the proof of the Lemma.
\end{proof}

\medskip

Now we turn to the much easier task of finding $\omega_1$ dense sets in $\mathbb{P}$ such that a filter in $\mathbb{P}$ meeting all of them
gives us an uncountable subset of $Y$ that is free in $X$. First we show that for every countable ordinal $\alpha$ the set
$D(\alpha) = \{p \in P : \alpha \in \cup p\}$ is dense in $\mathbb{P}$. Indeed, for any $p \in P$ clearly there is $M \in \mathcal{S}$
such that $\{p, \alpha\} \subs M$. But then $p > q = p \cup \{M\} \in D(\alpha)$.

So, assume that $G \subs P$ is a filter in $\mathbb{P}$ such that $G \cap D(\alpha) \ne \emptyset$ for all $\alpha \in \omega_1$.
Clearly, then $\cup G \subs \mathcal{S}$ is an uncountable $\epsilon$-chain that is well-ordered by $\epsilon$ in type $\omega_1$.
Since $\{M, N\} \subs \mathcal{S}$ with $M \in N$ implies $y(M) \prec y(N)$, it follows that $S = \{y(M) : M \in \cup G\}$
is also well-ordered by $\prec$ in type $\omega_1$.

Since $G$ is a filter in $\mathbb{P}$, if $\{M, N\} \subs G$ with $M \in N$ then we have $\{M, N\} < \{N\}$,
hence $y(M) \in U_{y(N)}$ by the definition of $<\,$.
On the other hand, if $N \in M$ then $y(M) \in \overline{A_{y(N)}}$. Thus $\overline{U_{y(N)}} \cap \overline{A_{y(N)}} = \emptyset$
implies that $$\overline{\{y(M) : M \in \cup G \cap N\}} \cap \overline{\{y(M) : M \in \cup G \text{ and }N \in M \}} = \emptyset$$
for every $N \in \cup G$. So, the set $S^+$ of all successor members of $S$ under $\prec$ is free in $X$.
\end{proof}

We are now ready to present our promised main result.

\begin{theorem}\label{tm:main}
Under PFA we have $|X| \le w(X)^\omega$ for every regular space $X$ with $F(X) = \omega$.
\end{theorem}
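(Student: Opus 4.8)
The plan is to deduce the theorem from Theorem~\ref{tm:h} by proving, under PFA, the cardinal inequality $\mathbf{h}(\omega)\le\mathfrak c$. Applying Theorem~\ref{tm:h} with $\kappa=\omega$ to a regular $X$ with $F(X)=\omega$ gives $|X|\le\bigl(w(X)\cdot\mathbf{h}(\omega)\bigr)^\omega=w(X)^\omega\cdot\mathbf{h}(\omega)^\omega$; since we may assume $X$ is infinite, $w(X)\ge\omega$ and hence $w(X)^\omega\ge\mathfrak c$, so once $\mathbf{h}(\omega)\le\mathfrak c$ is available we get $\mathbf{h}(\omega)^\omega\le\mathfrak c^\omega=\mathfrak c\le w(X)^\omega$ and therefore $|X|\le w(X)^\omega$. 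Thus everything comes down to showing that, under PFA, a regular separable space with free set number $\omega$ has cardinality at most $\mathfrak c$.

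To establish this I would argue by contradiction, producing an uncountable free set via Theorem~\ref{tm:PFA}. Let $X$ be regular with $F(X)=\omega$, let $D\subseteq X$ be countable dense, and suppose $|X|>\mathfrak c$. Fix an elementary submodel $M\prec H(\vartheta)$ for a suitably large regular $\vartheta$, with $|M|=\mathfrak c$, $[M]^\omega\subseteq M$, and $X,D\in M$ (so also $D\subseteq M$). Since $|X|>|M|$ pick $z\in X\setminus M$, set $Y=X\cap M$ (so $D\subseteq Y$ and $|Y|\le\mathfrak c$), and let $\mathcal A=\{A\in[Y]^\omega:z\in\overline A\}$, a subfamily of $[M]^\omega\subseteq M$. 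If the three hypotheses of Theorem~\ref{tm:PFA} hold for this $Y$ and $\mathcal A$, then $Y\subseteq X$ has an uncountable free subset, contradicting $F(X)=\omega$.

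Checking the hypotheses: for (1), any countable $\mathcal A_0\subseteq\mathcal A$ satisfies $\mathcal A_0\in M$, so the closed set $\bigcap\{\overline A:A\in\mathcal A_0\}$ lies in $M$; it contains $z\notin M$, hence is nonempty, hence by elementarity meets $M$, so $Z(\mathcal A_0)\ne\emptyset$. For (2), given $y\in Y$ (hence $y\ne z$), regularity supplies open $U_y\ni y$ and open $W\ni z$ with $\overline{U_y}\cap\overline W=\emptyset$; since $z\in\overline{D\cap W}$, any countable $A\subseteq D\cap W$ with $z\in\overline A$ lies in $\mathcal A$ and has $\overline A\subseteq\overline W$, so $\overline{U_y}\cap\overline A=\emptyset$. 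For (3), a parallel argument shows every $H\in\mathcal H$ must satisfy $z\in\overline H$ (otherwise, taking open $V\ni z$ with $\overline V\cap\overline H=\emptyset$, a countable $A\subseteq D\cap V$ with $z\in\overline A$ would lie in $\mathcal A$ yet have $\overline A\cap H=\emptyset$, contradicting $H\in\mathcal H$); since $H\subseteq Y$, a countable subset of $H$ whose closure contains $z$ is then a member of $\mathcal A$ contained in $H$.

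The genuine obstacle is buried in (2), (3) and in the nonemptiness of $\mathcal A$: the argument above repeatedly requires a \emph{countable} $A\subseteq D\cap W$ with $z\in\overline A$ for an arbitrary neighbourhood $W$ of $z$ --- that is, countable tightness (or at least countable $\pi$-character) at $z$. If $X$ is countably tight this is free and the proof closes; the hard case is that of a point $z$ of uncountable tightness, i.e.\ a set $S\subseteq X$ with $z\in\overline S$ but $z\notin\overline{S_0}$ for all countable $S_0\subseteq S$, for which $\mathcal A$ as defined is empty. I expect that handling such a ``$\sigma$-inaccessible'' accumulation point --- whether by showing under PFA that a regular separable space with $F(X)=\omega$ is countably tight, or by a separate construction of $Y$ and $\mathcal A$ built from $S$ --- is where the essential use of $F(X)=\omega$ and the weight of the argument lie.
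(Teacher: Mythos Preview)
You have correctly located the gap, and it is exactly the one the paper has to deal with: from $H\in\mathcal H$ you can only extract $z\in\overline{H}$, and without countable tightness this does not give a countable $A\subseteq H$ with $z\in\overline A$, so condition~(3) of Theorem~\ref{tm:PFA} is not verified. (By contrast, your worries about (2) and about $\mathcal A\ne\emptyset$ are unfounded in the separable setting: $D$ itself lies in $\mathcal A$, and $A_y=D\cap V_y$ works for (2) since $D$ is dense.) Your guess that one should first prove countable tightness is not what the paper does, and indeed that is not known to follow from PFA here.

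The paper closes the gap with two ideas that are missing from your sketch. First, instead of putting only a countable dense set into the model, it puts an entire open base $\mathcal B$ of size $w(X)$ into $M$; this upgrades $z\in\overline{Y}$ to the much stronger $z\in\overline{Y}^\delta$ (every $G_\delta$ around $z$ meets $Y$). In your separable reduction this is still available, since $w(X)\le 2^{d(X)}\le\mathfrak c=|M|$. Second, and crucially, the paper does a case split \emph{before} invoking Theorem~\ref{tm:PFA}: either there is some $H\subseteq Y$ with $z\in\overline{H}^\delta$ but $z\notin\overline{S}$ for every countable $S\subseteq H$, in which case Lemma~\ref{lm:Gdelta} already yields an uncountable free set in ZFC; or else every $H\subseteq Y$ with $z\in\overline{H}^\delta$ contains a member of $\mathcal A$. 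Under the second alternative, condition~(3) is verified by showing that $H\in\mathcal H$ implies $z\in\overline{H}^\delta$ (not merely $z\in\overline H$): given a $G_\delta$ set $G=\bigcap_nU_n$ with $\overline{U_{n+1}}\subseteq U_n$ and $z\in G$, fix any $A\in\mathcal A$ and note $\mathcal A_0=\{A\cap U_n:n<\omega\}\subseteq\mathcal A$ with $Z(\mathcal A_0)\subseteq G$, whence $H\cap G\ne\emptyset$. So the ``$\sigma$-inaccessible'' case you singled out is handled by Lemma~\ref{lm:Gdelta}, and PFA is only used in the complementary case. Your reduction to $\mathbf h(\omega)\le\mathfrak c$ via Theorem~\ref{tm:h} is perfectly sound, but once one has the two missing ingredients the direct argument with a model of size $w(X)^\omega$ is no harder and avoids the detour.
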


\begin{proof}
We are going to prove the contrapositive of the statement: If $X$ is regular with $|X| > w(X)^\omega$ then $F(X) > \omega$.
Let us fix an open base $\mathcal{B}$ of $X$ with $|\mathcal{B}| = w(X)$ and then
consider an elementary submodel
$M$ of $H(\vartheta)$ for a large enough regular cardinal $\vartheta$ such that $|M| = w(X)^\omega$, $\,M$ is
$\omega$-closed, and $X, \mathcal{B} \in M$. Let $Y = M \cap X$ and pick $z \in X \setm Y$.
Since $\mathcal{B} \in M$ and $M$ is $\omega$-closed, we clearly have $z \in \overline{Y}^\delta$, i.e. $G \cap Y \ne \emptyset$ for any
$G_\delta$-set $G$ with $z \in G$.

Now we distinguish two cases. First, if there is $H \subs Y$ such that  $z \in \overline{H}^\delta$ but $z \notin \overline{S}$
for all $S \in [H]^\omega$ then $H$ has an uncountable subset free in $X$ by Lemma \ref{lm:Gdelta}, hence we are done.
(This part does not use PFA.)

So, we may assume that, putting $\mathcal{A} = \{A \in [Y]^\omega : z \in \overline{A}\}$, every
$H \subs Y$ with  $z \in \overline{H}^\delta$ includes a member of $\mathcal{A}$. In particular, we have $\mathcal{A} \ne \emptyset$.

Now, if $\mathcal{A}_0$ is any countable subfamily of $\mathcal{A}$ then by the $\omega$-closure of $M$ and by
elementarity we have $Z(\mathcal{A}_0) = \bigcap \{\overline{A} : A \in \mathcal{A}_0\} \cap Y \ne \emptyset.$
So, condition (1) of Theorem \ref{tm:PFA} is satisfied.

Fix $A \in \mathcal{A}$ and for every $y \in Y$ pick open $U_y$ with $y \in U_y$ and $V_y$ with $z \in V_y$ such that
$\overline{U_y} \cap \overline{V_y} = \emptyset$. Then for $A_y = A \cap V_y \in \mathcal{A}$ we have
$\overline{U_y} \cap \overline{A_y} = \emptyset$. This means that condition (2) of Theorem \ref{tm:PFA} is also satisfied.

Finally, assume that $H \in \mathcal{H}$, i.e. $H \cap Z(\mathcal{A}_0) \ne \emptyset$ for all countable $\mathcal{A}_0 \subs \mathcal{A}$.
We claim that then $z \in \overline{H}^\delta$.
By the regularity of $X$, it suffices for this to show that $z \in G$ implies $G \cap H \ne \emptyset$ for any $G_\delta$-set
of the form $G = \cap_{n < \omega}U_n$, where  $\overline{U_{n+1}} \subs U_n$ for all $n < \omega$.
But for any fixed $A \in \mathcal{A}$ then $\mathcal{A}_0 = \{A \cap U_n : n < \omega\} \subs \mathcal{A}$,
moreover $Z(\mathcal{A}_0) \subs G$, hence $G \cap H \ne \emptyset$.
But by our assumption then $H$ includes a member of $\mathcal{A}$, hence condition (3) of Theorem \ref{tm:PFA} is also satisfied.
Consequently, we get an uncountable subset of $Y$ that is free in $X$ by Theorem \ref{tm:PFA}, in this case as well.
\end{proof}

Since $w(X) \le 2^{d(X)}$ for any regular space $X$ and PFA implies $\mathfrak{c} = 2^{\omega_1} = \omega_2$ , we have
${\bf{h}}(\omega) = {\bf{h}}(\omega_1) =\omega_2$ under PFA, as an immediate corollary of Theorem \ref{tm:main}.
These lead to the following results, whose easy proofs are left to the reader.

\begin{corollary}\label{co:F=t}
Assume PFA and let $X$ be a regular space.
\begin{enumerate}[(i)]
  \item If $F(X) = \omega$ and $t(X) \le \omega_1$ then $d(X) \le \mathfrak{c}$ implies $|X| \le \mathfrak{c}$.
  \item If $F(X) = t(X) = \omega$ then $|X| \le d(X)^\omega$.
\end{enumerate}
\end{corollary}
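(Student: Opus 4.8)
The plan is to deduce both statements from the displayed remark preceding the corollary, namely that under PFA one has ${\bf{h}}(\omega) = {\bf{h}}(\omega_1) = \mathfrak{c}$, by a closing-off argument with an elementary submodel that is a variant of the proof of Theorem \ref{tm:h}: tightness will play the role that the existence of a base of size $w(X)$ played there. I treat the two parts uniformly with a parameter $\kappa \in \{\omega,\omega_1\}$, where $\kappa=\omega$ for (ii) and $\kappa=\omega_1$ for (i). Fix a dense set $D \subseteq X$ with $|D| = d(X)$, and take an elementary submodel $M$ of $H(\vartheta)$, for a large enough regular $\vartheta$, such that $X, D \in M$, $\,D \subseteq M$, $\,\mathfrak{c} \subseteq M$, $\,M$ is $\kappa$-closed, and $|M| = d(X)^\kappa$. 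For $\kappa = \omega$ such an $M$ exists exactly as in the proof of Theorem \ref{tm:main} (note $(d(X)^\omega)^\omega = d(X)^\omega$ and $d(X)^\omega \ge \mathfrak{c}$, so both $D \subseteq M$ and $\mathfrak{c} \subseteq M$ can be arranged); for $\kappa = \omega_1$ it exists by the same standard construction, using that PFA gives $\mathfrak{c}^{\omega_1} = 2^{\omega_1} = \mathfrak{c}$, so that $d(X)^{\omega_1} \le \mathfrak{c}^{\omega_1} = \mathfrak{c}$ when $d(X) \le \mathfrak{c}$, and taking $|M| = \mathfrak{c}$.

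The claim is that $X \subseteq M$. Granting this, $|X| \le |M|$, which equals $d(X)^\omega$ in case (ii) and $\mathfrak{c}$ in case (i) (here using $d(X) \le \mathfrak{c}$, as above), so we are done. To prove $X \subseteq M$, assume otherwise and pick $z \in X \setminus M$. Since $D \subseteq M$ is dense in $X$ we have $z \in \overline{D}$, so by $t(X) \le \kappa$ there is $S \in [D]^{\le \kappa}$ with $z \in \overline{S}$. As $S \subseteq D \subseteq M$ and $|S| \le \kappa$, the $\kappa$-closedness of $M$ gives $S \in M$, and hence $\overline{S} \in M$ as well. Now $\overline{S}$ is a closed subspace of $X$, hence regular; moreover $d(\overline{S}) \le |S| \le \kappa$, and $F(\overline{S}) \le F(X) = \omega$, because closures taken in the closed set $\overline{S}$ agree with closures taken in $X$, so that every free sequence in $\overline{S}$ is free in $X$. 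Therefore $|\overline{S}| \le {\bf{h}}(\kappa) = \mathfrak{c}$. But then, since $\overline{S} \in M$, $\,|\overline{S}| \le \mathfrak{c} \subseteq M$, and $|M| \ge \mathfrak{c}$, choosing in $M$ a surjection from an ordinal $\le \mathfrak{c}$ onto $\overline{S}$ and applying elementarity to each ordinal in its domain yields $\overline{S} \subseteq M$. This contradicts $z \in \overline{S}$ and $z \notin M$.

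I do not expect a real obstacle: the whole content of the corollary is the equality ${\bf{h}}(\omega_1) = \mathfrak{c}$, i.e.\ Theorem \ref{tm:main}. The only points needing (routine) care are the existence of the $\omega_1$-closed elementary submodel of size $\mathfrak{c}$ for part (i) --- which is standard given $\mathfrak{c}^{\omega_1} = \mathfrak{c}$ --- and the monotonicity $F(\overline{S}) \le F(X)$ for the closed subspace $\overline{S}$, noted above. The degenerate possibility that $\overline{S}$ is finite (so that its free set number is finite rather than $\omega$) is harmless, since then $\overline{S} \subseteq M$ holds automatically.
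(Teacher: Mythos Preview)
Your argument is correct. The paper leaves the proof to the reader, so there is no explicit argument to compare against, but the intended deduction from ${\bf h}(\omega)={\bf h}(\omega_1)=\mathfrak{c}$ is almost certainly the obvious covering argument rather than an elementary-submodel closing-off: with $D$ dense and $t(X)\le\kappa$ one has $X=\bigcup\{\overline{S}:S\in[D]^{\le\kappa}\}$, and each $\overline{S}$ is a closed (hence regular) subspace with $d(\overline{S})\le\kappa$ and $F(\overline{S})\le F(X)=\omega$, so $|\overline{S}|\le{\bf h}(\kappa)=\mathfrak{c}$; thus $|X|\le d(X)^\kappa\cdot\mathfrak{c}$, which equals $d(X)^\omega$ in case (ii) and $\mathfrak{c}$ in case (i) (using $\mathfrak{c}^{\omega_1}=\mathfrak{c}$ under PFA). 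Your elementary-submodel proof encodes exactly this idea --- the crucial step ``$\overline{S}\in M$ and $|\overline{S}|\le\mathfrak{c}\subseteq M$ imply $\overline{S}\subseteq M$'' is the submodel version of bounding each $|\overline{S}|$ by $\mathfrak{c}$ --- but the direct union computation avoids the need to build and verify the $\omega_1$-closed model. Either way, the substantive content is Theorem \ref{tm:main} via ${\bf h}(\omega_1)=\mathfrak{c}$, as you correctly identify.
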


Since $F(X) = t(X)$ for compact $X$, part (ii) implies  $|X| \le d(X)^\omega$ for countably tight compact $X$.
This, of course was known as a consequence of Balogh's classical result that countably tight compacta are sequential
under PFA, see \cite{B}.

We close this section by raising two related questions that we could not answer.

\begin{problem}
Does PFA imply $|X| \le d(X)^\omega$ for each regular space $X$ with $F(X) = \omega$?
Or, at least, does PFA imply $|X| \le \mathfrak{c}$ if $d(X) \le \mathfrak{c}$
\end{problem}

\begin{problem}
Can Theorem \ref{tm:main} be extended from regular to Urysohn spaces?
\end{problem}

\section{The Hausdorff case}

In this section we are going to present a ZFC example of a Hausdorff space $X$ such that $F(X) = \omega$, $\,w(X) \le \mathfrak{c}$,
and $|X| = 2^{\mathfrak{c}}$. So, this will show that regularity cannot be weakened to Hausdorff in our main result \ref{tm:main}.
The example is non-trivial, however, fortunately for us, we only need to perform a minor modification of the example from \cite{JShSSz}
where the hard work was done.

Now, the example from \cite{JShSSz} is a separable {\em strongly anti-Urysohn} (SAU) space $X$ of cardinality $2^{\mathfrak{c}}$.
The SAU property means that any two infinite closed sets in $X$ intersect. This then trivially implies $F(X) = \omega$ because
then every free sequence in $X$ has order type less than
$\omega + \omega$. (Actually, a SAU space must also have at least two non-isolated
points but as this $X$ is separable, it has only countably many isolated points.)

The reason why the space $X$ from \cite{JShSSz} needs to be modified for our purposes is that its weight is $2^{\mathfrak{c}}$.
So, the space we need will be $X_r$, whose topology $\tau_r$ is the coarser topology on $X$ generated by $RO(X)$, the family
of all regular open sets in $X$. Then $X_r$ is Hausdorff because $U \cap V = \emptyset$ for any open $U, V$ in $X$ implies
$Int \overline{U} = Int \overline{V} = \emptyset$. Also, $X_r$ is SAU because $X$ is, hence $F(X_r) = \omega$. Finally, since
$X$ is separable, we have $|RO(X)| = \mathfrak{c}$, consequently $\,w(X) \le \mathfrak{c}$.

The space $X$ from \cite{JShSSz} is right-separated, i.e. scattered and, although many consistent examples of crowded SAU spaces had
been constructed, their existence in ZFC was not known. So, it was asked explicitly in \cite{JShSSz} if
they exist. Now, $X_r$ has the same isolated points as $X$, so it is not crowded but using $\,w(X) \le \mathfrak{c}$
we can actually get such an example, thus giving an affirmative answer to this problem from \cite{JShSSz}.

\begin{theorem}
$X_r$ has a closed, hence also SAU, subspace $Y$ such that $\Delta(Y) = 2^{\mathfrak{c}}$, i.e. every non-empty open set in $Y$
has cardinality $2^{\mathfrak{c}}$. In particular, $Y$ is crowded.
\end{theorem}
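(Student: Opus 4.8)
The plan is to find a closed subspace $Y$ of $X_r$ of size $2^{\mathfrak c}$ all of whose non-empty (relatively) open sets still have size $2^{\mathfrak c}$. Since $X_r$ is SAU and the SAU property is inherited by closed subspaces, such a $Y$ will automatically be SAU, and having no isolated points (because $\Delta(Y) = 2^{\mathfrak c} > 1$) it will be crowded. So the whole content is the existence of a "fat" closed subspace. A natural way to produce one is to start from $X$ (equivalently $X_r$, since they have the same closed sets up to the regular-open modification) and remove a suitable union of open sets: let $Y = X_r \setminus \bigcup\{U \in RO(X) : |U| < 2^{\mathfrak c}\}$, i.e. the set of points no small regular-open set around them. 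This $Y$ is closed in $X_r$ because we removed a union of $\tau_r$-open sets, and by construction every non-empty trace $V \cap Y$ with $V \in RO(X)$, if non-empty, cannot be covered by small regular-open sets, so one wants to conclude $|V \cap Y| = 2^{\mathfrak c}$.

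The first step is therefore to verify that $Y \ne \emptyset$, and in fact $|Y| = 2^{\mathfrak c}$. For this one uses that $|X| = 2^{\mathfrak c}$ while $w(X_r) = |RO(X)| \le \mathfrak c$, so the union of the small regular-open sets is a union of at most $\mathfrak c$ sets each of size $< 2^{\mathfrak c}$; since $\mathfrak c < \operatorname{cf}(2^{\mathfrak c})$ need not hold in ZFC, one must be slightly careful — but $2^{\mathfrak c}$ is a cardinal of uncountable cofinality is also not guaranteed. The cleaner route is: the bad set $B = \bigcup\{U \in RO(X): |U| < 2^{\mathfrak c}\}$ is itself a $\tau_r$-open set, hence is a union of basic open sets from $RO(X)$; if $|B| = 2^{\mathfrak c}$ one can re-examine, but the point of the construction in \cite{JShSSz} should give that the "small" regular-open sets cannot cover a set of full size. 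So here I would invoke the structure of the original space: since $X$ from \cite{JShSSz} was built so that it is separable of size $2^{\mathfrak c}$ with strong intersection properties, one expects that every non-empty regular-open set already has size $2^{\mathfrak c}$, or at least that the points lying in no small regular-open set form a set of size $2^{\mathfrak c}$. This is the step I expect to be the main obstacle: extracting from the (black-box) construction of \cite{JShSSz} the quantitative statement that small regular-open sets are sparse enough that their union omits $2^{\mathfrak c}$ many points.

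Granting that $|Y| = 2^{\mathfrak c}$, the second step is to show $\Delta(Y) = 2^{\mathfrak c}$. A basic non-empty open subset of $Y$ has the form $V \cap Y$ for some $V \in RO(X)$ with $V \cap Y \ne \emptyset$. If $|V \cap Y| < 2^{\mathfrak c}$, I would aim to derive a contradiction: the points of $V$ not in $Y$ each lie in some small $W \in RO(X)$, and $V$ itself may or may not be small — if $V$ were small then $V \subseteq B$ so $V \cap Y = \emptyset$, contradiction; hence $|V| = 2^{\mathfrak c}$, and then $V \setminus Y \subseteq B$ has size $2^{\mathfrak c}$ covered by small regular-open sets, which pushes the difficulty back to a covering bound of the same flavour as in the first step. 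So in fact both steps reduce to one assertion about $X$: \emph{the union of all regular-open sets of size $<2^{\mathfrak c}$ has size $<2^{\mathfrak c}$ inside every regular-open set of size $2^{\mathfrak c}$}, equivalently every non-empty regular-open subset of $Y$ has full size. I would establish this last assertion by a direct counting argument using that $X$ is separable (so $|RO(X)| \le \mathfrak c$) together with whatever homogeneity or product-like structure the construction of $X$ in \cite{JShSSz} provides — presumably $X$ sits inside a power $2^{\kappa}$-style object where non-empty clopen-like sets visibly have size $2^{\mathfrak c}$. Once that assertion is in hand, $Y$ as defined is the required closed, crowded, SAU subspace with $\Delta(Y) = 2^{\mathfrak c}$, and the theorem follows, answering the problem from \cite{JShSSz} of a ZFC crowded SAU space.
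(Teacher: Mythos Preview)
Your definition of $Y$ is exactly the one the paper uses, and your second-step analysis (if $V\cap Y\ne\emptyset$ then $V$ cannot be small, so $|V|=2^{\mathfrak c}$, and $V\setminus Y\subseteq B$) is also on target. The genuine gap is in the first step, where you write that ``$\mathfrak c < \operatorname{cf}(2^{\mathfrak c})$ need not hold in ZFC''. This is false: by K\"onig's theorem, $\operatorname{cf}(2^\kappa)>\kappa$ for every infinite cardinal $\kappa$, so in particular $\operatorname{cf}(2^{\mathfrak c})>\mathfrak c$ is a ZFC theorem. With this in hand the counting is immediate: there are at most $|RO(X)|\le\mathfrak c$ small regular-open sets (equivalently, the paper uses $hL(X_r)\le w(X_r)\le\mathfrak c$ to extract a subcover of size $\le\mathfrak c$), their sizes are bounded below $2^{\mathfrak c}$, and since $\operatorname{cf}(2^{\mathfrak c})>\mathfrak c$ the supremum of these sizes is some $\mu<2^{\mathfrak c}$, whence $|B|\le\mathfrak c\cdot\mu<2^{\mathfrak c}$. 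This single inequality finishes both your first and second steps simultaneously.

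Because you missed this, you fall back on vague appeals to the internal structure of the construction in \cite{JShSSz} (``presumably $X$ sits inside a power $2^{\kappa}$-style object\ldots''). None of that is needed, and as written those appeals are not an argument. Once you insert the K\"onig step, your proof becomes essentially identical to the paper's.
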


\begin{proof}
Let $\mathcal{U}$ be the family of all those open sets in $X_r$ that have size $< 2^{\mathfrak{c}}$. Since $hL(X_r) \le w(X_r) \le \mathfrak{c}$,
there is a subfamily $\mathcal{V} \subs \mathcal{U}$ with $|\mathcal{V}| \le \mathfrak{c}$ such that $W = \cup\, \mathcal{U} = \cup\,\mathcal{V}$.
But we have $cf(2^{\mathfrak{c}}) > \mathfrak{c}$, hence $|W| < 2^{\mathfrak{c}}$. Now, it is obvious that $Y = X_r \setm W$ is as required.
\end{proof}

Thus only just one open problem is left concerning SAU spaces that we cannot resist to repeat here: Is it provable in ZFC
that every SAU space has cardinality at most $2^{\mathfrak{c}}$? It was proved in \cite{JSSz} that $2^{2^{\mathfrak{c}}}$
is an upper bound.

\end{document}